\renewcommand*{\backrefalt}[4]{%
	\ifcase #1 (Not cited.)%
	\or        (Cited on page~#2.)%
	\else      (Cited on pages~#2.)%
	\fi}
\numberwithin{equation}{section}
\newcommand{\version}{version 1.0,\ \ July, 2022}
\def\eqref#1{(\ref{#1})}
\newcommand{\goth}{\mathfrak}
\newcommand{\g}{{\mathfrak g}}
\newcommand{\ag}{{\mathfrak a}}
\newcommand{\arrow}{{\:\longrightarrow\:}}
\newcommand{\Z}{{\Bbb Z}}
\def\C{{\Bbb C}}
\newcommand{\R}{{\Bbb R}}
\newcommand{\Q}{{\Bbb Q}}
\renewcommand{\H}{{\Bbb H}}
\def\1{\sqrt{-1}\:}
\newcommand{\cntrct}                
{\hspace{2pt}\raisebox{1pt}{\text{$\lrcorner$}}\hspace{2pt}}
\renewcommand{\tilde}{\widetilde}
\renewcommand{\bar}{\overline}
\renewcommand{\phi}{\varphi}
\renewcommand{\epsilon}{\varepsilon}
\renewcommand{\geq}{\geqslant}
\renewcommand{\leq}{\leqslant}
\newcommand{\s}{{\goth s}}
\newcommand{\End}{\operatorname{End}}
\newcommand{\Id}{\operatorname{Id}}
\newcommand{\Lie}{\operatorname{Lie}}
\newcommand{\Tw}{\operatorname{Tw}}
\newcommand{\Mon}{\operatorname{Mon}}
\newcommand{\su}{\goth{su}}
\newcommand{\f}{\goth{f}}
\newcommand{\GL}{\operatorname{GL}}
\newcounter{Mycounter}[section]
\newcounter{lemma}[section]
\newcounter{claim}[section]
\newcounter{sublemma}[section]
\newcounter{corollary}[section]
\renewcommand{\thecorollary}{{Corollary \thesection.\arabic{corollary}}}
\newcommand{\corollary}{%
    \setcounter{corollary}{\value{Mycounter}}
    \refstepcounter{corollary}
    \stepcounter{Mycounter}
    {\noindent \bf \thecorollary:\ }}
\newcounter{theorem}[section]
\renewcommand{\thetheorem}{{Theorem \thesection.\arabic{theorem}}}
\newcommand{\theorem}{%
    \setcounter{theorem}{\value{Mycounter}}
    \refstepcounter{theorem}
    \stepcounter{Mycounter}
    {\noindent \bf \thetheorem:\ }}
\newcounter{conjecture}[section]
\renewcommand{\theconjecture}{{Conjecture \thesection.\arabic{conjecture}}}
\newcommand{\conjecture}{%
    \setcounter{conjecture}{\value{Mycounter}}
    \refstepcounter{conjecture}
    \stepcounter{Mycounter}
    {\noindent \bf \theconjecture:\ }}
\newcounter{proposition}[section]
\renewcommand{\theproposition}
      {{Proposition \thesection.\arabic{proposition}}}
\newcommand{\proposition}{%
    \setcounter{proposition}{\value{Mycounter}}
    \refstepcounter{proposition}
    \stepcounter{Mycounter}
    {\noindent \bf \theproposition:\ }}
\newcounter{definition}[section]
\renewcommand{\thedefinition}
      {{Definition~\thesection.\arabic{definition}}}
\newcommand{\definition}{%
    \setcounter{definition}{\value{Mycounter}}
    \refstepcounter{definition}
    \stepcounter{Mycounter}
    {\noindent \bf \thedefinition:\ }}
\newcounter{example}[section]
\renewcommand{\theexample}{{Example \thesection.\arabic{example}}}
\newcommand{\example}{%
    \setcounter{example}{\value{Mycounter}}
    \refstepcounter{example}
    \stepcounter{Mycounter}
    {\noindent \bf \theexample:\ }}
\newcounter{remark}[section]
\renewcommand{\theremark}{{Remark \thesection.\arabic{remark}}}
\newcommand{\remark}{%
    \setcounter{remark}{\value{Mycounter}}
    \refstepcounter{remark}
    \stepcounter{Mycounter}
    {\noindent \bf \theremark:\ }}
\newcounter{problem}[section]
\newcounter{question}[section]
\newcommand{\proof}{\noindent{\bf Proof:\ }}
\def\blacksquare{\hbox{\vrule width 5pt height 5pt depth 0pt}}
\def\endproof{\blacksquare}
\title{Complex curves in hypercomplex nilmanifolds with $\H$-solvable Lie algebras}
\author{Yulia Gorginyan}
\begin{document}
\maketitle
\begin{abstract}
An operator $I$ on a real Lie algebra $\g$ is called a complex structure operator if $I^2=-\Id$ and the $\sqrt{-1}$-eigenspace $\g^{1,0}$ is a Lie subalgebra in the complexification of $\g$. A hypercomplex structure on a Lie algebra $\g$ is a triple of complex structures $I,J$ and $K$ on $\g$ satisfying the quaternionic relations. We call a hypercomplex nilpotent Lie algebra {\bf $\H$-solvable} if there exists a sequence of $\H$-invariant subalgebras 
\begin{equation*}
 \g_1^{\H}\supset\g_2^{\H}\supset\cdots\supset\g_{k-1}^{\H}\supset\g_k^{\H}=0,
\end{equation*}
such that $[\g_i^{\H},\g_i^{\H}]\subset\g^{\H}_{i+1}.$ We give examples of $\H$-solvable hypercomplex structures on a nilpotent Lie algebra and conjecture that all hypercomplex structures on nilpotent Lie algebras are $\H$-solvable. Let $(N,I,J,K)$ be a compact hypercomplex nilmanifold associated to an $\H$-solvable hypercomplex Lie algebra. We prove that, for a general complex structure $L$ induced by quaternions, there are no complex curves in a complex manifold $(N,L)$.

\end{abstract}
\tableofcontents

\section{Introduction}

\subsection{Complex nilmanifolds}

Recall that {\bf a nilmanifold} $N$ is a compact manifold that admits a transitive action of a nilpotent Lie group $G$. Any nilmanifold is diffeomorphic to a quotient of a connected, simply connected nilpotent Lie group $G$ by a cocompact lattice $\Gamma$ \cite{Mal}. 

\medskip
 A complex nilmanifold could be defined in two different ways. {\bf A complex parallelizable nilmanifold} \cite{W} is a compact quotient of a complex nilpotent Lie group by a discrete, cocompact subgroup. This is not the definition we use. We define {\bf a complex nilmanifold} as a quotient of a nilpotent Lie group with a left-invariant complex structure by a left action of a cocompact lattice.

\medskip
Nilmanifolds provide examples of non-K\"ahler complex manifolds. One of the first examples of a non-K\"ahler complex manifold was given by Kodaira \cite{Has1}, see also \cite{Th}. It is a complex surface called {\bf the Kodaira surface} with the first Betti number $b_1=3$. It was proven in \cite{BG} that a complex nilmanifold does not admit a K\"ahler structure unless it is a torus. Moreover, in \cite{Has} Hasegawa proved that a nilmanifold that is not a torus is never homotopically equivalent to any K\"ahler manifold. 

\medskip
\example
{\bf The Kodaira surface} could be obtained as a quotient of the group of matrices of the form 
$$G:=\left\{\begin{pmatrix}
1 & x & z & t \\
0 & 1 & y & 0 \\
0 & 0 & 1 & 0\\
0 & 0 & 0 & 1 
\end{pmatrix}\,:\,x,y,z,t\in\R\right\}$$
by the subgroup $\Gamma$ of the similar matrices with integer entries.

\medskip
\begin{example} The example of a complex nilmanifold that is obtained from a complex Lie group is
{\bf an Iwasawa manifold}. It is a compact quotient of the 3-dimensional complex Heisenberg group $G$ by a cocompact, discrete subgroup $\Gamma$ of the corresponding matrices with the Gaussian integer entries. Unlike the Kodaira surface, the Iwasawa manifold is parallelizable, that is, its tangent bundle is trivial as a holomorphic bundle. 
\end{example}


\subsection{$\H$-solvable Lie algebras}
Let $\g=\Lie G$ denote the Lie algebra of a  nilpotent Lie group $G$. 

\medskip
\begin{definition}\label{complex structure}
A subalgebra $\g^{1,0}\subset\g\otimes_{\R}\C$ which satisfies $\g^{1,0}\oplus\bar{\g^{1,0}}=\g\otimes_{\R}\C$ and $[\g^{1,0},\g^{1,0}]\subset\g^{1,0}$
defines {\bf a complex structure operator} $I\in\End(\g)$. Subspaces $\g^{1,0}$ and $\bar{\g^{1,0}}=\g^{0,1}$ are $\sqrt{-1}$ and $-\sqrt{-1}$ eigenspaces of the operator $I$ respectively.
\end{definition}

\medskip
Let $\H$ be the quaternion algebra. Recall that $\H$ is generated by $I, J$ and $K$ satisfying  the quaternionic relations:
\begin{equation}\label{HH}
I^2=J^2=K^2=-{\Id},\quad IJ=-JI=K.
\end{equation}

\medskip
\begin{definition}
 Let $\g$ be a nilpotent Lie algebra. {\bf A hypercomplex structure} on $\g$ is a triple of endomorphisms $I,J,K\in\End(\g)$ which satisfies the conditions \eqref{HH} and defines the complex structures in the sense of \ref{complex structure}.
\end{definition}

Denote by $\g_i^I:=\g_i+I\g_i$ the smallest $I$-invariant Lie subalgebra which contains the commutator subalgebra $\g_i=[\g_{i-1},\g]$, $i\in\Z_{>0}$. Reformulating the result of S. Salamon \cite[Theorem 1.3]{S}, D. Millionschikov in \cite[Proposition 2.5]{Mil} has shown that 
 \begin{equation*}
 [\g_k^I,\g_k^I]\subset\g_{k+1}^I
 \end{equation*}
 and 
 \begin{equation*}
\g_1^I:=[\g,\g]+I[\g,\g]\ne\g.
 \end{equation*}
Hence, a sequence of complex-invariant Lie subalgebras
\begin{equation*}
    \g\supset\g_1^I\supset\cdots\supset\g_n^I=0
\end{equation*}
 terminates for some $n\in\Z_{>0}$. It is natural to ask a similar question about the hypercomplex nilpotent Lie algebras: is the algebra $\H\g_1:=[\g,\g]+I[\g,\g]+J[\g,\g]+K[\g,\g]$ equal to $\g$ or not?

\medskip
We introduce a notion of {\bf an $\H$-solvable} nilpotent Lie algebra. Define inductively $\H$-invariant Lie subalgebras: $\g_i^{\H}:=\H[\g_{i-1}^{\H},\g_{i-1}^{\H}]$, where $\g^{\H}_1=\H[\g,\g]$.

\medskip
\begin{definition}\label{H-sol}
A hypercomplex nilpotent Lie algebra $\g$ is called {\bf $\H$-solvable} if there exists $k\in\Z_{>0}$ such that
\begin{equation*}
 \g_1^{\H}\supset\g_2^{\H}\supset\cdots\supset\g_{k-1}^{\H}\supset\g_k^{\H}=0.
\end{equation*}
\end{definition}
Such a filtration corresponds to an iterated hypercomplex toric bundle, see \cite{AV}. Clearly, this holds if and only if $\g_{i-1}^{\H}\subsetneq\g^{\H}_{i}$ for any $i\in\Z_{>0}$.

There are no known examples of hypercomplex Lie algebras which are not $\H$-solvable.  

\medskip
\conjecture
 All hypercomplex structures on a nilpotent Lie algebra $\g$ are $\H$-solvable.


\subsection{Examples of $\H$-solvable algebras}

An example of an $\H$-solvable Lie algebra is given by {\bf an abelian complex structure}. Let us recall the definition.

\medskip
\begin{definition} Let $\g$ be a nilpotent Lie algebra with a complex structure.
Suppose that $[\g^{1,0},\g^{1,0}]=0$. This complex structure is called {\bf an abelian complex structure}.
\end{definition}

It was already known that a nilpotent Lie algebra that admits an abelian hypercomplex structure is $\H$-solvable \cite[Proposition 4.5]{AV}, see also \cite[Corollary 3.11]{Rol}. 

The main purpose of this article is to prove the following theorem: 

\medskip
\begin{theorem}
Let $(N,I,J,K)$ be a hypercomplex nilmanifold, and assume that the corresponding Lie algebra $\g$ is $\H$-solvable. Then there are no complex curves in the complex nilmanifold $(N,L)$, where $L=aI+bJ+cK$, $L^2=-\Id$ for all $(a,b,c)\in S^2$ except of a countable set $R\subset S^2$.
\end{theorem}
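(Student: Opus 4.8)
The plan is to argue by induction on the length $k$ of the $\H$-solvable filtration, using the iterated hypercomplex toric bundle structure associated to the filtration $\g_1^{\H}\supset\cdots\supset\g_k^{\H}=0$. The base case is when $\g=\g_1^{\H}$ is abelian, i.e. $\H[\g,\g]=0$; then $(N,L)$ is a (flat) complex torus for every $L$ induced by quaternions, and one knows precisely which complex tori admit complex curves — namely those whose period lattice satisfies a Riemann-type rational relation. Since the period data varies real-analytically with $(a,b,c)\in S^2$ and the conditions carving out the ``has a complex subtorus'' locus are countably many proper real-analytic conditions (each failing generically, since the generic complex torus of dimension $\geq 1$ contains no complex curve when $\dim_{\C}\geq 2$, and for $\dim_{\C}=1$ the statement is vacuous or the curve is the torus itself, a case we exclude), the exceptional set is a countable union $R_0\subset S^2$.

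For the inductive step, let $\g_{k-1}^{\H}=:\f$ be the last nontrivial term; it is an $\H$-invariant ideal (abelian, since $[\f,\f]\subset\g_k^{\H}=0$), and $\g/\f$ is again $\H$-solvable of filtration length $k-1$. Geometrically this realizes $(N,L)$ as the total space of a holomorphic fiber bundle $\pi\colon (N,L)\to (N',L')$ over a lower-dimensional hypercomplex nilmanifold, with fibers complex tori $T_L$ of complex dimension $\tfrac12\dim_{\R}\f$. Now suppose $C\subset (N,L)$ is a complex curve. Either $\pi(C)$ is a point, in which case $C$ lies in a single fiber $T_L$ and we are reduced to the torus case applied to $T_L$ (excluding a countable set $R_1$ of directions where some translate of the fiber torus acquires a complex curve — here one must check that the relevant period data of the fibers also varies real-analytically in $(a,b,c)$, which it does because the $\H$-action on $\f$ and the lattice are fixed while only $L$ rotates); or $\pi(C)$ is a curve in $(N',L')$, contradicting the inductive hypothesis for all $(a,b,c)$ outside the countable set $R'\subset S^2$ produced there. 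Setting $R:=R_0\cup R_1\cup R'$, still countable, finishes the induction.

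The main obstacle is the middle case: ruling out curves that dominate a curve downstairs is automatic by induction, and curves contained in a fiber reduce to tori, but one must be careful that a complex curve in $(N,L)$ need not project to a \emph{subvariety} a priori — one needs that $\pi(C)$ is a complex-analytic subset of $(N',L')$. This follows because $\pi$ is holomorphic and proper (compact fibers) and $C$ is compact, so $\pi(C)$ is a compact analytic subset by Remmert's proper mapping theorem, hence either a point or a (possibly singular) complex curve, and in the latter case its normalization maps to $(N',L')$ nonconstantly, which is what the inductive hypothesis forbids. The only real bookkeeping is to verify the real-analyticity of all period maps in the parameter $(a,b,c)$, so that each ``bad'' locus is a countable union of proper real-analytic subsets of $S^2$ and hence has a countable complement-closed bad set; I would isolate this as a lemma about families of complex tori arising from a fixed hypercomplex lattice with rotating $L$, and then the three countable sets assemble without difficulty.
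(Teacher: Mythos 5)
Your proposal has two genuine gaps, one of which concerns the very form of the conclusion. First, the countability of the exceptional set. In your base case (and in the fiber case of the induction) you argue that the ``has a complex curve'' locus is cut out by countably many proper real-analytic conditions on $(a,b,c)\in S^2$, and conclude that the bad set is countable. A countable union of proper real-analytic subsets of $S^2$ is measure zero but typically uncountable (each piece can be a curve in $S^2$), so this argument only yields ``for generic $L$'' in a weaker sense than the theorem asserts. The missing idea is the paper's key linear-algebra lemma (\ref{IJ}): a fixed nonzero bivector can be a \emph{positive} $(1,1)$-bivector for at most one complex structure $L$ in the quaternionic sphere (positivity for two distinct $L$'s forces $\su(2)$-invariance, which contradicts positivity). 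Since a compact curve $C_L$ determines, by integration against invariant forms and Nomizu's theorem, a class in the countable set $H_2(\g_{\Q})$, each such rational class contributes at most one bad point of $S^2$, and the bad set is genuinely countable. Without some substitute for this lemma your $R_0$ and $R_1$ are not countable sets of points.

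Second, your inductive step presupposes that $\f=\g_{k-1}^{\H}$ is an ideal of $\g$, so that $N\to N'$ is a holomorphic fiber bundle with torus fibers. The definition of the filtration only guarantees that each $\g_i^{\H}$ is an $\H$-invariant \emph{subalgebra} with $[\g_i^{\H},\g_i^{\H}]\subset\g_{i+1}^{\H}$; it is an ideal for $i=1$ (since $\g_1^{\H}\supset[\g,\g]$), but there is no reason for $\g_i^{\H}$ with $i\geq 2$ to be an ideal, and you give no argument. This is precisely why the paper works with the (merely integrable) left-invariant foliations $\Sigma_i$ and their leaves rather than with global quotient fibrations: the descent is done homologically, by pushing the curve's positive rational bivector to the abelianization $\ag_i$, using \ref{Pos} to kill it for $L$ outside a countable set, and then using a transversal K\"ahler form to force the curve into a leaf of the deeper foliation $\Sigma_i$. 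If you want to keep your bundle-theoretic picture, you would need either to prove that the $\g_i^{\H}$ are ideals (or replace them by an ideal-refined filtration), or to fiber in the other direction over the torus of $\g/\g_1^{\H}$, where the ideal property does hold, and rework the induction accordingly; in either case the countability issue from the first paragraph still has to be addressed separately.
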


\medskip
To give an example of an $\H$-solvable Lie algebra with a non-abelian hypercomplex structure, we need a construction described below. {\bf The quaternionic double} was introduced in the work \cite{SV}. Let $(X,I_X)$ be a complex manifold which admits a torsion-free flat connection $\nabla:TX\arrow TX\otimes\Lambda^1X$ which also satisfies $\nabla I=0$. For a fixed point $x\in X$ consider the monodromy group $\Mon(\nabla)\subset\GL(T_xX)$. Suppose that there exists a lattice $\Lambda_x\subset T_xX$ in a fiber which is preserved by the action of the monodromy group: $\Mon(\nabla)\Lambda_x=\Lambda_x$. Then we can construct the set $\Lambda\subset TX$ via all parallel transportation of $\Lambda_x$. Define a manifold $X^{+}:=TX/\Lambda$. It is fibered over $X$ with fibers $T_xX/\Lambda_x$ are compact tori. It makes sense since for each $x\in X$ the intersection $\Lambda\cap T_xX$ is a lattice continuously depending on $x\in X$. The manifold $X^{+}=TX/\Lambda$ is called {\bf the quaternionic double}. It was shown in \cite{SV} that there is a pair of  almost complex structures

$$I:=\begin{pmatrix}
I_X & 0 \\
0 & -I_X 
\end{pmatrix},\quad J:=\begin{pmatrix}
0 & -\Id \\
\Id & 0
\end{pmatrix}$$
on $X^{+}$ which are integrable and satisfy the quaternionic relations. 

\medskip
\begin{theorem}\label{SV}(Soldatenkov, Verbitsky)
Let $X^+$ be the quaternionic double of an affine complex manifold $X$. If $X$ is non-Ka\"hler, then $X^+$ does not admit an HKT-metric \cite{SV}.
\end{theorem}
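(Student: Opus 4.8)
The plan is to show that an HKT metric on $X^+$ would force the affine base $X$ to carry a K\"ahler form, contradicting the hypothesis. Recall that a quaternionic-Hermitian metric $g$ on a hypercomplex manifold is HKT if and only if the associated $(2,0)$-form $\Omega := \omega_J + \sqrt{-1}\,\omega_K$ (with respect to $I$) is $\partial$-closed, where $\partial$ is the Dolbeault differential of $(X^+,I)$ and $\omega_J(\cdot,\cdot)=g(J\cdot,\cdot)$, $\omega_K(\cdot,\cdot)=g(K\cdot,\cdot)$. First I would exploit the fibre symmetry of $X^+\arrow X$: the tori $T_xX/\Lambda_x$ act on $X^+$ by translations, and these translations preserve the flat connection $\nabla$, hence fix $I,J,K$. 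Thus each such translation $t$ is a hypercomplex automorphism, $t^*g$ is again a positive quaternionic-Hermitian HKT metric, and averaging over the compact torus (which commutes with $\partial$ and preserves positivity and $J$-reality) yields a fibrewise-invariant HKT form. So I may assume from the outset that $\Omega$ is invariant under the fibre action.

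Next I would set up an adapted coframe. Since $\nabla$ is flat, torsion-free and satisfies $\nabla I_X=0$, there are local holomorphic flat coordinates $z^i$ on $X$ with induced fibre coordinates $w^i$; because $I$ acts as $-I_X$ on the fibres, the $I$-holomorphic coordinates on $X^+$ are $z^i$ together with $u^i:=\bar w^i$, and the $(1,0)$-forms $dz^i$ (basic, pulled back from $X$) and $du^i$ (vertical) are closed. Writing the invariant form as
\[
\Omega = \sum_{i,j} A_{ij}\,dz^i\wedge dz^j + \sum_{i,j} B_{ij}\,dz^i\wedge du^j + \sum_{i,j} C_{ij}\,du^i\wedge du^j,
\]
with coefficients depending only on $(z,\bar z)$, I would impose the reality condition coming from $J$, using $J\,du^i=d\bar z^i$ and $J\,dz^i=-d\bar u^i$ (which I would verify from the explicit matrices for $I$ and $J$). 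This forces $C_{ij}=\bar{A_{ij}}$ and makes $B=(B_{ij})$ a Hermitian matrix. A direct evaluation then shows that the restriction of the HKT metric to the horizontal distribution is the Hermitian form with matrix $B$ (concretely, $\Omega(\partial_{z^i},J\partial_{\bar z^j})=B_{ij}$), so positivity of $g$ forces $B$ to be positive-definite. Consequently $\omega_X:=\sqrt{-1}\sum_{i,j}B_{ij}\,dz^i\wedge d\bar z^j$ is a positive $(1,1)$-form; being the horizontal block of a genuine tensor with fibrewise-constant coefficients, it descends to a globally defined positive $(1,1)$-form on $X$.

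Finally I would extract the K\"ahler condition from $\partial\Omega=0$. As the coefficients are independent of the fibre variables, the $dz^k\wedge dz^i\wedge du^j$ component of $\partial\Omega$ equals $\sum \partial_{z^k}B_{ij}\,dz^k\wedge dz^i\wedge du^j$, and its vanishing says $\partial_{z^k}B_{ij}-\partial_{z^i}B_{kj}=0$, which is precisely $\partial_X\omega_X=0$ on $X$. Reality of $\omega_X$ then gives $\bar\partial_X\omega_X=\overline{\partial_X\omega_X}=0$, hence $d\omega_X=0$. Thus $\omega_X$ is a K\"ahler form on $X$, contradicting the assumption that $X$ is non-K\"ahler.

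The main obstacle is making the local computation global and legitimate. One must check that the flat affine structure genuinely supplies closed $(1,0)$-coframes $dz^i,du^i$ on $X^+$, so that $\partial$ acts only on the coefficients and the mixed component of $\partial\Omega$ is the stated expression; and one must justify the averaging step, namely that the fibre translations are hypercomplex automorphisms and that the averaged form remains positive, $J$-real and $\partial$-closed. Once these points are secured, the identification of the horizontal block of $\Omega$ with a positive $(1,1)$-form on $X$ and the extraction of $\partial_X\omega_X=0$ are essentially forced, and the contradiction follows.
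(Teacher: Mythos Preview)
The paper does not prove this theorem: it is stated with attribution to Soldatenkov--Verbitsky and the citation \cite{SV}, with no argument supplied. So there is no proof in the paper to compare your proposal against.

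That said, your strategy is the natural one and is essentially the argument of \cite{SV}: average the HKT form $\Omega=\omega_J+\sqrt{-1}\,\omega_K$ over the torus fibres to make it invariant, split it according to the horizontal/vertical decomposition coming from the flat connection, and read off from the mixed block a positive Hermitian matrix $B=(B_{ij})$ of functions on $X$; the $\partial$-closedness of $\Omega$ then forces $\partial_X\big(\sqrt{-1}\sum B_{ij}\,dz^i\wedge d\bar z^j\big)=0$, and reality gives a K\"ahler form on $X$. The points you flag as obstacles are the genuine ones. In particular: (i) the global well-definedness of $\omega_X$ requires that the horizontal distribution and the identification of fibres with tangent spaces are canonical, which they are because $\nabla$ is flat and preserved by the fibre action; (ii) the claim that the horizontal restriction of $g$ is governed by $B$ hinges on the exact formulas for $J$ on the coframe, and your $J\,du^i=d\bar z^i$, $J\,dz^i=-d\bar u^i$ should be checked carefully from $J(a,b)=(-b,a)$ together with $u^i=\bar w^i$; (iii) the averaging preserves $\partial$-closedness because the torus acts by $I$-holomorphic maps, and preserves $J$-reality because it acts by hypercomplex automorphisms. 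With those verifications in place your outline is correct.
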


It was shown in \cite{DF}, see also \cite{BDV}, that any abelian hypercomplex nilmanifold is HKT.

\medskip
\example
To provide an example of an $\H$-solvable Lie algebra with a non-abelian hypercomplex structure, we first define the Kodaira surface, following \cite{Has}, see also \cite[Example 1.7]{AV}.
Consider the Lie algebra $\g=\langle x, y, z, t \rangle$, such that the only non-zero commutator is $[x,y]=z$. The complex structure is given by $Ix=y$ and $Iz=t$. There exists an operator $\nabla^+:\g\times\g\arrow\g$ defined by the formula $$\nabla^+_ab:=\frac{1}{2}([a,b]+I[Ia,b]), a,b\in\g$$
We extended $\nabla^+$ to a left-invariant connection on the Lie group $G$, also denoted by $\nabla^+$. 
It is easy to see that the connection $\nabla^+$ is complex-linear, torsion-free and flat. Therefore, we could define the quaternionic double of the  Kodaira surface. It is a nilmanifold associated with the Lie algebra $\g^+=\g\oplus\g$, with the commutator defined as follows: $$[(a,b),(c,d)]:=([a,b],\nabla^+_ad-\nabla^+_cb).$$
The hypercomplex structure on $\g^+$ is defined as follows: $$I(a,b)=(Ia,-Ib),\quad J(a,b)=(-b,a),\quad K(a,b)=(-Ib,-Ia).$$
Then $$\g^+_1:=[\g^+,\g^+]=[\g\oplus\g,\g\oplus\g]=\langle\lambda z, \mu z\rangle, \lambda,\mu\in\R.$$
Hence $\g_2^+=0$ (because there are no non-trivial commutators on the second step), which implies the $\H$-solvability of $\g^+$. From \ref{SV} it is clear that the hypercomplex structure on $\g^+$ is non-abelian.

This gives an example of an $\H$-solvable Lie algebra with a non-abelian complex structure.


\medskip
\example Let $(N,I,J,K)$ be a hypercomplex manifold. Fix a point $p\in N$ and consider the set $\g^{(d)}\subset\g$ of smooth vector fields such that $X\in\g^{(d)}$ has zero of order $d$ at $p$. Notice that the filtration $\g^{(i)}$ is $\H$-invariant with the commutator $[\g^{(d)},\g^{(k)}]\subset\g^{(d+k-1)}$. Therefore, the quotient  $\g^{(2)}/\g^{(n)}$ is an $\H$-solvable algebra.

\medskip
{\bf Acknowledgments:} I am thankful to Misha Verbitsky for turning my attention to this problem, his support and attention during the preparation of this paper. 

\section{Preliminaries}
\subsection{Nilpotent Lie algebras}

Let $G$ be a real nilpotent Lie group, and $\g$ its  Lie algebra. {\bf The descending central series} of a Lie algebra $\g$ is the chain of ideals defined inductively:
\begin{equation*}
  \g_0\supset\g_1\supset\dots\supset\g_k\supset\dots,  
\end{equation*}
where $\g_0=\g$ and $\g_k=[\g_{k-1},\g]$. It is also called {\bf the lower central series} of $\g$.
 
\medskip
\begin{definition}
A Lie algebra $\g$ is called {\bf nilpotent} if $\g_k=0$ for some $k\in\mathbb{Z}_{>0}$.
\end{definition}

\medskip
Let $\g$ be a Lie algebra and $\g^*$ its dual space. Recall that for any $\alpha\in\g^*$ the Chevalley–Eilenberg  differential $d:\g^*\arrow\Lambda^2\g^*$ is defined as follows $$d\alpha(\xi,\theta)=-\alpha([\xi,\theta]),$$ 
where $\xi,\theta\in\g$.
It extends to a finite-dimensional complex
\begin{equation}\label{CH}
    0\arrow\g^*\arrow\Lambda^2\g^*\arrow\cdots\arrow\Lambda^{n}\g^*\arrow 0
\end{equation}
by the Leibniz rule: $d(\alpha\wedge\beta)=d\alpha\wedge\beta+(-1)^{\tilde{\alpha}}\alpha\wedge d\beta$, where $\alpha,\beta\in\g^*$ and $n=\dim_{\mathbb{R}}\g$. The condition $d^2=0$ is equivalent to the Jacobi identity.

\subsection{A short review of the Maltsev theory }

Following Maltsev's papers \cite{Mal} and \cite{Mal2} we are going to consider only nilpotent groups without torsion.  

\medskip
\begin{definition}
 {\bf A nilmanifold} is a  compact manifold which admits a transitive action of a nilpotent Lie group.
\end{definition}

\medskip
Recall that {\bf a lattice $\Gamma$} is a discrete subgroup of a topological group $G$ such that there exists a regular finite $G$-invariant measure on the quotient $\Gamma\backslash G$.

\medskip
The famous {\bf Maltsev's theorem } states that any nilmanifold $N$ is diffeomorphic to a quotient of a connected, simply connected nilpotent Lie group $G$ by a cocompact lattice $\Gamma$. The group $\Gamma$ is isomorphic to the fundamental group $\pi_1(N)$
of the nilmanifold $N$ and $G$ is the {\bf Maltsev completion} of the group $\Gamma\approx\pi_1(N)$ \cite{Mal}. 

\medskip
\begin{definition}
 A group $G$ is called {\bf complete} if for each $g\in G$ there exists $n\in\Z_{>0}$ such that the equation $x^n=g$ has solutions in $G$. 
\end{definition}

\medskip
\begin{definition}
 Let $G$ be a subgroup of a complete nilpotent group $\hat{G}$. Suppose that for any $g\in\hat{G}$ there exists $n\in\Z_{>0}$ such that $g^n\in G$. Then $\hat{G}$ is called {\bf the Maltsev completion} of a group $G$.
\end{definition}

\medskip
Let $\g_\mathbb{Q}$ be a nilpotent Lie algebra over the field of rational numbers $\mathbb{Q}$. We identify $\g_\mathbb{Q}$ with a subspace of a real nilpotent Lie algebra $\g=\g_{\Q}\otimes\R$, and call $\g_{\Q}$ {\bf the rational lattice }of $\g$.

\medskip
\begin{definition}
{\bf A rational structure} in a real nilpotent Lie algebra $\g$ is a rational lattice $\g_{\Q}$ such that $\g\cong\g_{\Q}\otimes\R$.
\end{definition}

\medskip
\begin{definition}
 Let $\Gamma$ be a lattice in a connected, simply connected nilpotent Lie group $G$. Then its {\bf associated rational structure} is the $\Q$-span of $\log\Gamma\subset\g$, where $\g=\Lie G$ is the Lie algebra. If $\g$ has a rational structure related to a $\Q$-algebra $\g_{\Q}\subset\g$ then there exists a discrete subgroup $\Gamma$ such that $\log\Gamma\subset\g_{\Q}$ and the quotient $\Gamma\backslash G$ is compact \cite[Theorem 5.1.7]{CG}.
\end{definition}

Let $\Gamma$ be a discrete subgroup of a connected, simply connected Lie group $G$. By \ref{Mal} the Maltsev completion $\hat{\Gamma}$ is the unique closed connected subgroup of $G$ such that a left quotient $\Gamma\backslash\hat\Gamma$ is compact.

\medskip
\begin{theorem}\label{Mal}(Maltsev)
Let $G$ be a finitely generated nilpotent group without torsion. Then there exists a nilpotent, complete and torsion-free group $\hat{G}$ such that $\hat{G}$ is a completion of $G$. Moreover, all the completions of $G$ are isomorphic \cite{Mal2}. Finally, $\hat{G}=\exp{\g_{\Q}}$ is the set of rational points in a real nilpotent Lie group $G$ with the Lie algebra $\g$ admitting a rational lattice $\g_{\Q}$.
\end{theorem}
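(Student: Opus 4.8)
The plan is to prove the three assertions of the theorem --- existence of the completion, its uniqueness up to isomorphism, and its realization as $\exp\g_{\Q}$ --- by reducing the group-theoretic statements to polynomial identities in Malcev coordinates and then to the Baker--Campbell--Hausdorff correspondence. First I would fix the lower central series $G=G_1\supset G_2\supset\cdots\supset G_{c+1}=1$, where $c$ is the nilpotency class. Since $G$ is finitely generated and torsion-free, each quotient $G_i/G_{i+1}$ is a finitely generated torsion-free abelian group, hence free abelian of finite rank. Refining the series to one with infinite cyclic factors, I would choose a Malcev basis $u_1,\dots,u_n$, so that every $g\in G$ has a unique normal form $g=u_1^{a_1}\cdots u_n^{a_n}$ with $a_i\in\Z$. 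The essential input is the Hall--Petresco collection process, which shows that in these coordinates the product and the inverse are given by polynomial maps $\Z^n\times\Z^n\arrow\Z^n$ and $\Z^n\arrow\Z^n$ whose defining polynomials have \emph{rational} coefficients (binomial coefficients enter through collection). This rationality is what makes the completion possible.

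Next I would construct $\hat{G}$ by extending the coordinate range from $\Z^n$ to $\Q^n$ and using the very same polynomial formulas to define multiplication and inversion. Associativity and the group axioms transfer from $G$ to $\hat{G}$ because they are polynomial identities holding on the Zariski-dense subset $\Z^n$; nilpotency of class $c$ and torsion-freeness are inherited in the same way. To verify completeness I would solve $x^m=g$ directly in coordinates: the leading coordinate of $x$ is $a_1/m$, and the remaining coordinates are determined recursively by the lower-step collection corrections, using that the cyclic factors become uniquely divisible over $\Q$; torsion-freeness forces the root to be unique. Clearing denominators then shows that every $g\in\hat{G}$ satisfies $g^N\in G$ for some $N\in\Z_{>0}$, so $\hat{G}$ is a completion of $G$ in the sense of the definition given above.

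For uniqueness I would first establish the key lemma that in a torsion-free divisible nilpotent group $m$-th roots are unique, arguing by induction on the nilpotency class and using that the center is itself torsion-free and divisible. Given any other completion $\hat{G}'$, I would define $\phi\colon\hat{G}\arrow\hat{G}'$ by sending $g$ (with $g^N\in G$) to the unique $N$-th root in $\hat{G}'$ of the image of $g^N$ under $G\hookrightarrow\hat{G}'$; the root lemma makes $\phi$ well defined and independent of the chosen $N$, and a short computation shows it is a homomorphism admitting an analogous inverse, hence an isomorphism fixing $G$ pointwise. This yields the canonical identification of all completions.

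Finally, to identify $\hat{G}$ with $\exp\g_{\Q}$, I would let $\g_{\Q}$ be the $\Q$-span of $\log u_1,\dots,\log u_n$ equipped with the bracket read off from the (finite, because nilpotent) Baker--Campbell--Hausdorff series; the rationality of the coordinate polynomials established in the first step is precisely the statement that the structure constants of $\g_{\Q}$ are rational. Then $\g=\g_{\Q}\otimes\R$ is the Lie algebra of the simply connected real nilpotent group $G$, on which $\exp$ is a diffeomorphism, and $\exp\g_{\Q}$ is exactly the uniquely divisible hull constructed above, namely $\hat{G}$. The main obstacle I expect is the combinatorial heart of the argument: proving via the Hall--Petresco collection that the normal-form multiplication is polynomial with rational coefficients, together with the unique-root lemma for torsion-free divisible nilpotent groups, since these two facts are what simultaneously guarantee that the completion exists, is complete, and is unique.
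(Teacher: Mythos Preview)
The paper does not give its own proof of this theorem: the statement is followed immediately by \verb|\endproof| and a citation to Maltsev's original paper \cite{Mal2}, so there is nothing to compare against on the paper's side. Your proposal is a correct and essentially standard outline of the classical argument --- Malcev basis with polynomial multiplication (Hall--Petresco collection) to build the rational hull, the unique-root lemma in torsion-free divisible nilpotent groups for uniqueness, and the Baker--Campbell--Hausdorff correspondence to identify $\hat G$ with $\exp\g_{\Q}$. This is precisely the route taken in Maltsev's work and in standard references such as \cite{CG}, so your sketch is in line with what the paper is implicitly invoking; no gap is apparent beyond the combinatorial verifications you already flag.
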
\endproof


\subsection{Hypercomplex nilmanifolds}

Let $X$ be a smooth manifold. Recall that {\bf an almost complex structure} on $X$ is an endomorphism $I\in\End(TX)$ satisfying $I^2=-\Id$. The Nijenhuis tensor $N_I$ associated to the almost complex structure $I$ is given by the formula $$N_I(X,Y)=[X,Y]+I[IX,Y]+I[X,IY]-[IX,IY].$$ An almost complex structure is called {\bf integrable} if its Nijenhuis tensor vanishes.

\remark $N_I=0$ if and only if $[TX^{1,0},TX^{1,0}]\subset TX^{1,0}$.

\medskip
\begin{theorem}(Newlander--Nirenberg)
If $I$ is an integrable almost complex structure on $X$, then $X$ admits the structure of a complex manifold compatible with $I$.
\end{theorem}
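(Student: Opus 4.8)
\medskip
\noindent The assertion is the Newlander--Nirenberg theorem, and it is local: writing $n=\dim_\C X$, it suffices to produce, in a neighbourhood of each point $p\in X$, smooth complex-valued functions $w_1,\dots,w_n$ whose differentials are of type $(1,0)$ for $I$ and are $\C$-linearly independent at $p$. Indeed, such a system then serves as a holomorphic chart, and on overlaps any two such systems are related by maps annihilated by the $(0,1)$-fields, hence $I$-holomorphic; the charts thus assemble into a complex atlas inducing $I$. So the whole problem reduces to solving, near $p$, the first-order system $\bar L_j w=0$, $j=1,\dots,n$, where $\bar L_1,\dots,\bar L_n$ is a local frame of the distribution $T^{0,1}X=\bar{T^{1,0}X}$, together with the nondegeneracy of the $dw_j$ at $p$.

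First I would normalize the geometry. Choosing real coordinates centred at $p$ in which $I(p)$ is the standard structure $I_0$ on $\C^n$, I can take the frame so that $\bar L_j=\partial/\partial\bar z^j+E_j$, where $E_j$ is a first-order operator with smooth coefficients vanishing at $p$ and, after rescaling to a small ball $B$, arbitrarily small in $C^0(B)$. Thus the operator $\bar\partial_I$ built from this frame is a small perturbation of the constant-coefficient Dolbeault operator $\bar\partial_0$, and seeking $I$-holomorphic coordinates amounts to inverting $\bar\partial_I$ on a small ball.

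The integrability hypothesis enters exactly here. By the remark preceding the statement, $N_I=0$ is equivalent to $[T^{0,1}X,T^{0,1}X]\subset T^{0,1}X$, i.e. to involutivity of the distribution spanned by the $\bar L_j$; equivalently $\bar\partial_I\circ\bar\partial_I=0$. This is the compatibility condition that makes the overdetermined system solvable: I would look for $w_j=z^j+u_j$ with $u_j$ small, reducing $\bar L_k w_j=0$ to a fixed-point equation $\bar\partial_0 u=F(u)$ in which $F$ is of higher order in $u$ and in the small coefficients of the $E_j$, and in which $\bar\partial_0 F(u)=0$ thanks to involutivity. One then solves $\bar\partial_0 u=F(u)$ by composing with a fixed right inverse of $\bar\partial_0$ on $B$ (a Cauchy--Pompeiu-type homotopy operator, or the solution operator furnished by the H\"ormander $L^2$ estimates for the $\bar\partial$-complex on the ball) and running a contraction-mapping iteration; the smallness of the coefficients on $B$ guarantees the map is a contraction on a small Sobolev ball. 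Elliptic regularity for $\bar\partial$ then bootstraps the solution to $C^\infty$, and since $u_j(p)=0$ and $du_j(p)=0$, the $dw_j=dz^j+du_j$ remain $\C$-linearly independent near $p$.

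The genuinely hard point, and the reason the theorem is nontrivial, is this smooth-category solvability step. In the real-analytic category one could instead invoke the complex Frobenius theorem directly on the involutive distribution $T^{0,1}X$ to produce the first integrals $w_j$, bypassing all analysis; but complex Frobenius fails in the $C^\infty$ setting, so the integration must be carried out by the elliptic PDE machinery above. Establishing the uniform estimates for the right inverse of $\bar\partial_0$ on the ball and controlling the nonlinear term $F$ well enough for the iteration to converge is where essentially all the work resides; an alternative that I would keep in reserve is a Nash--Moser scheme, which tolerates the derivative loss in such estimates.
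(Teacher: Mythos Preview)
The paper does not prove the Newlander--Nirenberg theorem; it simply states it as a classical result, with no proof or reference, and uses it as background in Section~2.3. So there is no ``paper's own proof'' to compare against.

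Your sketch is a reasonable outline of one of the standard analytic approaches (perturb from the constant structure, solve a $\bar\partial$-equation on a small ball by a fixed-point argument, invoke elliptic regularity), and you correctly flag the real obstacle: the loss of derivatives in the iteration, which is why the original proof and many of its descendants use either Nash--Moser or delicate function-space choices. One point to tighten: the claim that ``$\bar\partial_0 F(u)=0$ thanks to involutivity'' is not literally correct as stated---integrability gives $\bar\partial_I^2=0$, not $\bar\partial_0^2$ applied to the error term; what involutivity actually buys you is the algebraic structure of the commutators $[\bar L_j,\bar L_k]$ that makes the system formally compatible, and this has to be fed into the iteration more carefully than your sentence suggests. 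But for the purposes of this paper, which only quotes the theorem, your level of detail already exceeds what is needed.
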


\medskip
\begin{definition}
 {\bf A complex nilmanifold} is a pair $(N,I)$, where $N=\Gamma\backslash G$ is a nilmanifold obtained from a nilpotent Lie group $G$ and $I$ an integrable left-invariant almost complex structure on $G$.
\end{definition}

By definition, $I$ is left-invariant if the left translations $L_g: (G,I)\arrow (G,I)$ are holomorphic. Notice that the Lie group $G$ does not need to be a complex Lie group, but in the case when it does both left and right translations on $G$ are holomorphic.

\medskip
Let $X$ be a smooth manifold equipped with three integrable almost complex structures  $I, J, K\in\End(TX)$, satisfying the quaternionic relations $I^2=J^2=K^2=-\Id$ and  $IJ=K=-JI$. Such a quadruple $(X,I,J,K)$ is called {\bf a hypercomplex manifold}. Obata \cite{Ob} proved  that there exists a unique torsion-free connection $\nabla^{Ob}$ preserving the complex structures: 
\begin{equation*}
  \nabla^{Ob}I=\nabla^{Ob}J=\nabla^{Ob}K=0.  
\end{equation*}
The connection $\nabla^{Ob}$ is called {\bf the Obata connection}.

\medskip
A hypercomplex structure induces a complex structure $L=aI+bJ+cK$ for each $(a,b,c)\in\mathbb{R}^3$ such that $a^2+b^2+c^2=1$ and the set of such structures is identified in a natural way with $S^2\approx\mathbb{C}{\rm P}^1$. 

\medskip
Consider the product $X\times \mathbb{C}\rm{P}^1$, where $X$ is a hypercomplex manifold.
{\bf The twistor space} $\Tw(X)$ of the hypercomplex manifold $X$ is a complex manifold where the complex structure is defined as follows. 
 For any point $(x,L)\in X\times \mathbb{C}\rm{P}^1$ the complex structure on $T_{(x,L)}\Tw(X)$ is $L$ on $T_xX$ and the standard complex structure $I_{\mathbb{C}P^1}$ on $T_L\mathbb{C}\rm{P}^1$. This almost complex structure on the twistor space of a hypercomplex manifold is always integrable \cite{K}, \cite[Theorem 14.68]{Besse}. The space $\Tw(X)$ is equipped with the canonical holomorphic projection $\pi:\Tw(X)\arrow\mathbb{C}\rm{P}^1$. The fiber $\pi^{-1}(L)$ at a point $L\in\mathbb{C}\rm{P}^1$  is biholomorphic to the complex manifold $(X,L)$.
 
 \medskip
 \begin{definition} Let $\Gamma$ be a cocompact lattice in a  nilpotent Lie group $G$ with a left-invariant hypercomplex structure. Then the manifold $N=\Gamma\backslash G$ is called {\bf a hypercomplex nilmanifold}.
 \end{definition}




\subsection{Positive bivectors on a Lie algebra}

Consider a nilpotent Lie group $G$ with a left-invariant complex structure $I\in\End(TG)$. Recall that {\bf a complex structure operator} on a Lie algebra $\g$ can be given by a decomposition of the complexification $\g_{\mathbb{C}}=\g\otimes\mathbb{C}$ satisfying $\g_{\mathbb{C}}=\g^{1,0}\oplus\overline{\g^{1,0}}$, where $\g^{1,0}=\{X\,|\, X\in\g_{\mathbb{C}},\,I(X)=\sqrt{-1}X\}$ and $[\g^{1,0},\g^{1,0}]\subset\g^{1,0}$ by \ref{complex structure}.

\medskip
Denote the $k$-th exterior power of $\g^{1,0}$ (resp. $\g^{0,1}$) by $\Lambda^{k,0}\g$ (resp. $\Lambda^{0,k}\g)$. Consider the graded algebra of $(p,q)$-multivectors $\Lambda^*\g\otimes\C=\bigoplus_{p,q}\Lambda^{p,q}\g$.

\medskip
\begin{definition}
The elements of the space  $\Lambda^{1,1}\g\subset\Lambda^2\g$ are called {\bf (1,1)-bivectors} or just {\bf bivectors}.
\end{definition}

\medskip
A non-zero real bivector $\xi\in \Lambda^{1,1}\g$ is called {\bf positive} if for any non-zero $\alpha\in \Lambda^{1,0}\g^*$ one has $\xi(\alpha,I{\alpha})\geq 0$.

\medskip
The Lie bracket gives a linear mapping $\delta_1:\Lambda^2\g\arrow\g$, defined as $\delta_1:x\wedge y\mapsto[x,y]$. Such a mapping extends by the formula \eqref{LR} below to the finite-dimensional complex of {\bf $k$-multivectors}, i.e. $\delta_{m}:\Lambda^{m+1}\g\arrow\Lambda^m\g$
\begin{equation*}
     0\arrow\Lambda^{2n}\g\xrightarrow{\delta_{2n-1}} \cdots\arrow\Lambda^{2}\g\xrightarrow{\delta_1}\g\xrightarrow{\delta_0} 0
\end{equation*}
and it is dual to the Chevalley-Eilenberg complex \eqref{CH}. The boundary operator $\delta_{k-1}$ can be written as follows
\begin{equation}\label{LR}
    \delta_{k-1}(x_1\wedge\dots\wedge x_k)=\sum_{r<s}(-1)^{r+s+1}[x_r,x_s]\wedge x_1\wedge\dots\wedge\hat{x}_r\wedge\cdots\wedge\hat{x}_s\wedge\cdots\wedge x_k.
\end{equation}

\medskip
\begin{definition} 
 {\bf A complex curve} in a complex manifold $(X, I)$ is a 1-dimensional compact complex subvariety $C_I\subset X$.
\end{definition}

\medskip
Let $C_I\subset N$ be a complex curve in a complex nilmanifold $(N,I)$ and $\omega\in\Lambda^2\g^*$ a two-form. We identify $\Lambda^2\g^*$ with the space of left-invariant 2-forms on the Lie group $G$, which descends to the space of 2-forms $\Lambda^2(N)$ on the nilmanifold $N=\Gamma\backslash G$.

Consider a functional $\xi$ on the space of 2-forms $\Lambda^2\g^*$:
\begin{equation}\label{current}
    \xi_{C_I}(\omega):=\int_{C_I}\omega.
\end{equation}
Such a functional defines a bivector $\xi\in\Lambda^2\g$\footnote{Since the homology $H_*(N)=H_*(\g)$ by \ref{Nomizu} a complex curve $C_L$ corresponds to the bivector $\xi_{C_L}$}.

\section{Positive bivectors on a quaternionic vector space}

We start with a sequence of linear-algebraic lemmas. Let $V$ be a finite-dimensional vector space over $\C$ and $V^*$ its dual.
Denote by $(V,I)$ the pair of a vector space $V$ with a complex structure $I\in\End(V)$ on it.

\medskip
Recall that {\bf the kernel} of a bivector $\xi\in\Lambda^{1,1}V$ is the following set:
\begin{equation}\label{kernel}
   \ker\xi=\{x\in V^*\,|\,\xi(x,\cdot\,)=0\}\subset V^*. 
\end{equation}

We denote the space of positive bivectors with respect to the complex structure $I$ on a vector space $V$ by $\Lambda^{1,1}_{I, pos}V$.

\medskip
\begin{lemma}\label{Ker}
Let $(V,I)$ be a vector space with a complex structure $I$ and $\xi\in\Lambda^{1,1}_{I,\rm{pos}}V$ a non-zero positive bivector. Let $V_1^*:=\{x\in V^*\,|\, \xi(x,Ix)=0\}$. Then $V_1^*=\ker\xi$.
\end{lemma}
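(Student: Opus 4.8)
The plan is to show the two inclusions $V_1^* \subseteq \ker\xi$ and $\ker\xi \subseteq V_1^*$. The inclusion $\ker\xi \subseteq V_1^*$ is immediate from the definitions: if $\xi(x,\,\cdot\,)=0$ then in particular $\xi(x,Ix)=0$, so $x\in V_1^*$. Thus the content of the lemma is the reverse inclusion: if $\xi(x,Ix)=0$ then $\xi(x,\,\cdot\,)=0$.

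For the reverse inclusion I would use a positivity (Cauchy--Schwarz type) argument. Since $\xi\in\Lambda^{1,1}_{I,\mathrm{pos}}V$, the real bilinear form $h(\alpha,\beta):=\xi(\alpha,I\beta)$ on $V^*$ is symmetric (this should be checked: for a $(1,1)$-bivector the pairing $\xi(\alpha,I\beta)$ is $I$-Hermitian, hence its real part is symmetric) and positive semidefinite, because $h(\alpha,\alpha)=\xi(\alpha,I\alpha)\geq 0$ for all $\alpha$ by definition of positivity. Now a positive semidefinite symmetric form satisfies the Cauchy--Schwarz inequality $|h(\alpha,\beta)|^2 \leq h(\alpha,\alpha)\,h(\beta,\beta)$, so any $x$ with $h(x,x)=\xi(x,Ix)=0$ lies in the radical of $h$, i.e. $\xi(x,Iy)=0$ for all $y\in V^*$. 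Since $I$ is invertible on $V^*$, replacing $y$ by $-I y$ (using $I^2=-\Id$) gives $\xi(x,y)=0$ for all $y$, that is $x\in\ker\xi$. This proves $V_1^* \subseteq \ker\xi$ and hence equality.

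The one point that needs genuine care — and which I expect to be the main (if modest) obstacle — is verifying that $h(\alpha,\beta)=\xi(\alpha,I\beta)$ is actually symmetric, so that the Cauchy--Schwarz inequality is available. This is where the $(1,1)$-type of $\xi$ enters: writing $\xi$ in a basis adapted to the decomposition $V_{\C}^* = V^{1,0*}\oplus V^{0,1*}$ (or simply writing $\xi = \sum_j a_{ij}\, e_i \wedge \bar e_j$ with respect to a $(1,0)$-coframe), one computes $\xi(\alpha, I\beta)$ and checks it equals $\xi(\beta, I\alpha)$ when $\xi$ is real, using $I$-invariance of the splitting and $\bar\xi = \xi$. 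Once symmetry is in hand, the rest is the standard linear-algebra fact that the null cone of a positive semidefinite symmetric form coincides with its radical (kernel), applied with the substitution $y \mapsto -Iy$ to translate "radical of $h$" into "kernel of $\xi$".
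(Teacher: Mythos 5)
Your proof is correct and is essentially the paper's argument: both come down to the standard fact that for a positive semidefinite symmetric form the null cone coincides with the radical, which the paper gets by diagonalizing $\xi$ on the quotient $V/\ker\xi$ and you get by Cauchy--Schwarz applied to $h(\alpha,\beta)=\xi(\alpha,I\beta)$. Your explicit check that $h$ is symmetric (via the $I$-invariance $\xi(I\alpha,I\beta)=\xi(\alpha,\beta)$ of a real $(1,1)$-bivector) is a detail the paper leaves implicit, and is the right point to spell out.
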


\begin{proof} From the definition \eqref{kernel} it is obvious that $\ker\xi\subset V_1^*$. 
Suppose that $x\in V_1^*$ and $x\not\in\ker\xi$. Then $0\ne [x]\in V/\ker\xi$. On the space $V/\ker\xi$ the bivector $\xi$ is positive definite because it has no kernel and it is diagonalizable. 
Hence, $\xi(x,Ix)>0$, which is a contradiction.
\end{proof}

\medskip 
Let $W_1$ be a subspace of a vector space $(W,I)$ and consider two maps: 
$p:W\arrow W/W_1$ and  $\tilde{p}:\Lambda^2W\arrow\Lambda^2(W/W_1)$. There are two subspaces of $\Lambda^2W$: $\Lambda^2\ker p=W_1\wedge W_1$ and $\ker\tilde{p}=W\wedge W_1$. It is obvious that $\Lambda^2\ker p\subset\ker\tilde{p}$. We are going to show that $\Lambda^{1,1}_{I,pos}W_1\cap\ker\tilde{p}=\Lambda^{1,1}_{I,pos}W_1\cap\Lambda^2\ker p$.

\medskip
\begin{lemma}\label{p_1}
Let $W_1$ be a subspace of a vector space $(W,I)$ and $\xi\in\Lambda^{1,1}_{I, pos}W$ a positive (1,1)-bivector. Assume that $\xi\in\ker\tilde{p}$. Then $\xi\in\Lambda^2W_1$. 
\end{lemma}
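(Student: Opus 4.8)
The plan is to use the positivity hypothesis to control $\ker\xi$ via Lemma \ref{Ker}, and then show that the hypothesis $\xi\in\ker\tilde p = W\wedge W_1$ forces every covector outside $W_1^\perp$ to lie in $\ker\xi$, which is what membership in $\Lambda^2 W_1 = W_1\wedge W_1$ amounts to after dualizing. First I would reformulate everything on the dual side. Writing $\xi$ as an element of $\Lambda^{1,1}W$, its contraction $\xi(x,\cdot)$ for $x\in W^*$ is an element of $W$; the condition $\xi\in W\wedge W_1$ says precisely that for every $x$ annihilating $W_1$ (i.e. $x\in W_1^{\perp}\subset W^*$) the contraction $\xi(x,\cdot)$ lands in $W_1$, and moreover $\xi(x,y)=0$ whenever both $x,y\in W_1^{\perp}$. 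The goal $\xi\in W_1\wedge W_1$ is the stronger statement that $\xi(x,\cdot)=0$ for every $x\in W_1^{\perp}$.

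The key step is to exploit positivity together with $I$-invariance. Since $W_1$ need not be $I$-invariant, I would first replace it by $W_1' := W_1 + I W_1$, the smallest $I$-invariant subspace containing $W_1$; note $W\wedge W_1 \subset W\wedge W_1'$, so $\xi \in \ker\tilde p'$ where $\tilde p'$ is the analogous projection for $W_1'$, and it suffices to prove $\xi \in \Lambda^2 W_1'$ and then argue separately (or, more cleanly, observe that the whole statement reduces to the $I$-invariant case because a positive $(1,1)$-bivector supported on $W\wedge W_1$ is automatically supported on $W\wedge(W_1\cap IW_1)$-type data — this is exactly the kind of reduction Lemma \ref{Ker} is designed for). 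Concretely: take $x\in W_1^{\perp}$. I want to show $\xi(x,\cdot)=0$. Consider the covector $Ix$ (using the dual complex structure). If $Ix$ also annihilates $W_1$, then $x, Ix \in W_1^{\perp}$, hence $\xi(x,Ix)=0$ by the hypothesis $\xi\in W\wedge W_1$ (both slots hit the kernel of $p$), so by Lemma \ref{Ker} applied with $V=W$ we get $x\in\ker\xi$, i.e. $\xi(x,\cdot)=0$, as desired. The remaining case is when $Ix\notin W_1^{\perp}$; here I would instead decompose $W^*$ using the positive-definite form induced by $\xi$ on $W^*/\ker\xi$ and the fact that this form is $I$-invariant (positivity of a $(1,1)$-bivector is equivalent to $\xi(\,\cdot\,,I\,\cdot\,)$ being a semi-positive Hermitian form), so $\ker\xi$ is $I$-invariant and the orthogonal complement of $\ker\xi$ in $W^*$ with respect to $\xi(\cdot,I\cdot)$ is also $I$-invariant; on that complement $\xi$ is nondegenerate, and the annihilator condition coming from $\xi\in W\wedge W_1$ then pins down the complement to lie inside $W_1^{\perp\perp}$-dual data, forcing the complement into $(W_1^{\perp})^{\circ}$.

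I expect the main obstacle to be precisely the handling of the non-$I$-invariance of $W_1$: the clean Lemma \ref{Ker} argument only directly applies to covectors $x$ with $x,Ix$ both in $W_1^{\perp}$, and one must show that the ``mixed'' covectors contribute nothing. The way to close this gap is to prove that $\ker\xi$ is $I$-invariant (immediate from positivity, since $\xi(x,Ix)=0 \iff \xi(Ix, I(Ix)) = \xi(Ix,-x)=\xi(x,Ix)=0$ using skew-symmetry and the $(1,1)$-type, so $V_1^*$ is $I$-invariant and equals $\ker\xi$ by Lemma \ref{Ker}), and then observe $W_1^{\perp}\cap I(W_1^{\perp}) = (W_1 + IW_1)^{\perp}$ is $I$-invariant and every $x\in W_1^{\perp}$ can be written relative to the decomposition $W^* = (W_1+IW_1)^{\perp} \oplus \big((W_1+IW_1)^{\perp}\big)^{\complement}$; combined with the bilinear-form reformulation of the hypothesis ($\xi$ vanishes on $W_1^{\perp}\times W_1^{\perp}$ and $\xi(W_1^{\perp},W^*)\subset$ dual of $W/W_1$), positivity upgrades ``$\xi$ vanishes on $W_1^{\perp}\times W_1^{\perp}$'' to ``$W_1^{\perp}\subset\ker\xi$'', which is the claim $\xi\in\Lambda^2 W_1$. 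I would write this last upgrade as the one-line consequence of Lemma \ref{Ker} once $I$-invariance of $\ker\xi$ and of $W_1^{\perp}\cap I W_1^{\perp}$ is in hand, treating the general $x\in W_1^\perp$ by projecting onto the $I$-invariant part.
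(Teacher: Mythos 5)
Your core argument --- the branch where $x$ and $Ix$ both annihilate $W_1$ --- is exactly the paper's proof: $\xi\in W\wedge W_1$ gives $\xi(W_1^{\perp},W_1^{\perp})=0$, hence $\xi(x,Ix)=0$ for every $x\in W_1^{\perp}$, hence $W_1^{\perp}\subset\ker\xi$ by \ref{Ker}, hence $\xi\in\Lambda^2W_1$. You are also right to worry that this only works when $W_1^{\perp}$ is $I$-invariant, i.e.\ when $W_1$ is $I$-invariant; the paper's proof uses this silently, and in the only place the lemma is applied ($W_1=\g_i^{\H}$, a quaternion-invariant subalgebra) it holds.

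The genuine gap is your ``remaining case'' $Ix\notin W_1^{\perp}$. The paragraph you devote to it is not an argument --- phrases like ``pins down the complement to lie inside $W_1^{\perp\perp}$-dual data'' do not identify a checkable step --- and no amount of work will close it, because the statement is false for non-$I$-invariant $W_1$. Take $W=\R^4$ with $Ie_1=e_2$, $Ie_3=e_4$, let $\xi=e_1\wedge e_2$ (a positive $(1,1)$-bivector) and $W_1=\langle e_1\rangle$. Then $\xi=-e_2\wedge e_1\in W\wedge W_1=\ker\tilde{p}$, while $\Lambda^2W_1=0$, so $\xi\notin\Lambda^2W_1$. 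Your proposed reduction to $W_1'=W_1+IW_1$ does not help either: proving $\xi\in\Lambda^2W_1'$ (which is true, and is exactly what your first branch yields) is strictly weaker than $\xi\in\Lambda^2W_1$. The correct fix is to add the hypothesis that $W_1$ is $I$-invariant to the statement of \ref{p_1} --- which costs nothing, since the subalgebras it is applied to are $\H$-invariant --- after which your first branch is the entire proof and coincides with the paper's.
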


\begin{proof}
Denote by $W_1^{\perp}\subset W^*$ the annihilator of the subspace $W_1$; it is isomorphic to the dual of the quotient $(W/W_1)^*$.
Since $\xi\in W\wedge W_1$, we have $\xi(W_1^{\perp},W_1^{\perp})=0$. Therefore,  $W_1^{\perp}\subset\ker\xi$ by \ref{Ker}. 
So, $\xi_{\arrowvert_{W^*\wedge W_1^{\perp}}}=0$, which implies that $\xi\in \Lambda^2W_1$.
 \end{proof}

\medskip
Recall that for any pair of orthogonal complex structures $I,J\in\H$, one has
\begin{equation}\label{ocs}
    J(\Lambda^{p,q}_IV)=\Lambda^{q,p}_IV,
\end{equation}
 where the action of the complex structures $I$ and $J$ extended from $\Lambda^{1,0}V$ and $\Lambda^{0,1}V$ to $(p,q)$-bivectors by a multiplicativity. Indeed, $I$ and $J$ anticommute on $\Lambda^1(V)$ which implies $J(\Lambda^{1,0}_I)=\Lambda^{0,1}_I$ and 
$J(\Lambda^{0,1}_I)=\Lambda^{1,0}_I$

Consider the operator $W_I:\Lambda^*V_I\arrow\Lambda^*V_I$ defined by the formula $W_I(\xi)=\sqrt{-1}(p-q)\xi$, where $\xi\in\Lambda^{p,q}_IV$. Notice that the elements $W_I, W_J$ and $W_K$ generate the Lie algebra $\su(2)$ and the complex structures $I, J$ and $K$ are the the elements of the Lie group $SU(2)$ related to them, $I=\exp{\frac{\pi W_I}{2}}, J=\exp{\frac{\pi W_J}{2}}$ and $K=\exp{\frac{\pi W_K}{2}}$ \cite{V2}.

\medskip
\begin{lemma}\label{IJ}
Let $V$ be a quaternionic vector space and $\Lambda^{1,1}_{I,\rm{pos}}V\subset\Lambda^2V$ the space of positive $(1,1)$-bivectors on $(V,I)$. Then $\Lambda^{1,1}_{I,\rm{pos}}V\cap\Lambda^{1,1}_{I',\rm{pos}}V=0$ for distinct complex structures $I$ and $I'$.
\end{lemma}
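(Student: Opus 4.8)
The plan is to reduce the claim to a statement about single complex structures via the kernel characterisation in Lemma~\ref{Ker}, and then exploit the anticommutation relation \eqref{ocs} together with positivity. Suppose $\xi$ is a non-zero bivector lying in $\Lambda^{1,1}_{I,\mathrm{pos}}V\cap\Lambda^{1,1}_{I',\mathrm{pos}}V$. First I would reduce to the case where $\xi$ has trivial kernel: by Lemma~\ref{Ker}, $\ker\xi = \{x\in V^*\mid \xi(x,Ix)=0\}$, and likewise with $I'$; passing to the quotient of $V^*$ by $\ker\xi$ (equivalently restricting $\xi$ to a complementary subspace) we may assume $\xi$ is positive \emph{definite} with respect to $I$, and we still need it to be compatible with some complex structure $I'$ in the $S^2$ of quaternionic structures. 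The point is that a positive-definite $(1,1)$-form for $I$ pins down $I$ essentially uniquely: if $\xi$ is also of type $(1,1)$ for $I'$, then $\xi(I'x, I'y) = \xi(x,y)$, so $I'$ is $\xi$-orthogonal, and combined with $I$ being $\xi$-orthogonal and both squaring to $-\Id$, the composition $I^{-1}I' = -II'$ is a $\xi$-symmetric involution-like operator whose spectrum one controls.

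The cleanest route, I think, is to work on $\Lambda^{1,1}_I V$ directly and use the $\su(2)$-action generated by $W_I, W_J, W_K$. Decompose $\Lambda^2 V$ under this $\su(2)$; the $(1,1)_I$-bivectors are the ones annihilated by $W_I$, i.e.\ the zero-weight space for $W_I$. If a complex structure $I' = aI+bJ+cK$ (with $(a,b,c)\neq(\pm1,0,0)$) also has $\xi$ of type $(1,1)$, then $W_{I'}\xi = 0$ as well. Since $W_I$ and $W_{I'}$ span a two-dimensional subspace of $\su(2)$ (because $I\neq \pm I'$ means $W_I, W_{I'}$ are linearly independent), the common kernel $\ker W_I\cap\ker W_{I'}$ inside each irreducible $\su(2)$-summand of $\Lambda^2 V\otimes\C$ is the space of vectors killed by a 2-dimensional subalgebra; but any two non-proportional elements of $\su(2)$ generate all of $\su(2)$ as a Lie algebra, hence a vector killed by both is killed by all of $\su(2)$, i.e.\ lies in the trivial (invariant) part of $\Lambda^2 V$. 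So $\xi$ must be $\H$-invariant: $W_I\xi = W_J\xi = W_K\xi = 0$.

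Now I would show that a nonzero $\H$-invariant real bivector cannot be positive for $I$. An $\H$-invariant bivector is in particular fixed by $J$, and by \eqref{ocs} $J$ exchanges $\Lambda^{2,0}_I V$ and $\Lambda^{0,2}_I V$ while acting on $\Lambda^{1,1}_I V$; the $(1,1)$-component of an $\H$-invariant bivector must therefore be $I$-invariant and $J$-invariant simultaneously, and the standard fact (e.g.\ from the $\su(2)$-representation theory on $\Lambda^2$ of a quaternionic space, or a direct check in a quaternionic basis) is that the only $\su(2)$-invariant $(1,1)$-form is zero when $\dim_\H V \geq 1$ — more precisely, $J$ maps the Hermitian form $\xi(\cdot, I\cdot)$ to $-\xi(\cdot, I\cdot)$, so an $\H$-invariant $\xi$ has $\xi(x,Ix) = -\xi(x,Ix)$ for all $x$, forcing $\xi(x,Ix)\equiv 0$ and hence, by Lemma~\ref{Ker}, $\xi = 0$. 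This contradicts $\xi\neq 0$, completing the proof.

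The main obstacle I anticipate is making the $\su(2)$-representation-theoretic step fully rigorous over $\R$ versus $\C$: one must be careful that "killed by a 2-dimensional subspace of $\su(2)$ implies killed by all of $\su(2)$" is applied correctly (it uses that $\su(2)$ has no 2-dimensional subalgebras, so the Lie algebra generated by $W_I, W_{I'}$ is everything), and that the reality constraint on $\xi$ is tracked through the complexification. An alternative, more hands-on argument avoiding representation theory would pick a $\xi$-orthonormal basis adapted to $I$, write out the condition that $I' = aI+bJ+cK$ preserves $\xi$ as a system of equations on the matrix entries, and deduce $b=c=0$; this is more computational but entirely elementary, and I would fall back on it if the clean argument gets tangled.
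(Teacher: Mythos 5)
Your proof is correct and follows essentially the same route as the paper: both use that $W_I$ and $W_{I'}$ generate $\su(2)$ to force $\xi$ to be $SU(2)$-invariant, and then the computation $\xi(x,Ix)=\xi(Jx,JIx)=-\xi(Jx,IJx)$ together with positivity and Lemma~\ref{Ker} to conclude $\xi=0$. The only point to tidy up is the case $I'=-I$, which your $\su(2)$ step explicitly excludes (since $W_{-I}=-W_I$ is proportional to $W_I$) but which the lemma covers; the paper disposes of it separately in one line, since positivity for both $I$ and $-I$ already gives $\xi(\alpha,I\alpha)\equiv 0$.
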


\begin{proof}
 Denote the intersection $R_{I,I'}:=\Lambda^{1,1}_{I,\rm{pos}}V\cap\Lambda^{1,1}_{I',\rm{pos}}V$.
First, suppose that $I'=-I$, the non-zero bivector $\xi\in R_{I,-I}$, and let $\alpha\in V^*$. Then $\xi(\alpha,I\alpha)> 0$ and $\xi(\alpha,-I\alpha)< 0$, so there is no such a bivector $\xi$.

Assume that $I'\ne -I$.
 Then suppose that $J\in\H$ is orthogonal to $I$ and $IJ=-JI$, $J^2=-\Id$. It is clear that as $I\ne\pm I'$, $I$ is not proportional to $I'$, then $I'$ can be written in a form $I'=aI+bJ$ for some $a,b\in\R$, $b\ne 0$.
 
 Let $\xi\in R_{I,I'}$. Since $\xi$ is a $(1,1)$-bivector, $W_I(\xi)=W_{I'}(\xi)=0$. Hence, $W_K(\xi)=0$ because $W_I$ and $W_{aI+bJ}$ generate the Lie algebra $\su(2)$ \cite{V2}. We obtain that $\xi$ is an $\su(2)$-invariant bivector. It is therefore invariant under the multiplicative action $J(\xi)(\alpha,\beta):=\xi(J\alpha,J\beta)$ of $J\in SU(2)$. Consider
\begin{equation}\label{SU}
  0\leq\xi(\alpha,I\alpha)=\xi(J\alpha,JI\alpha)=-\xi(J\alpha,IJ\alpha)=-\xi(\beta,I\beta),  
\end{equation}
where $\beta=J\alpha$. However, $-\xi(\beta,I\beta)\leq 0$, hence $\xi=0$. \end{proof}

\medskip
\corollary\label{PandI}
The intersection of the set of positive bivectors and $SU(2)$-invariant bivectors contains only zero bivector.

\proof Follows from the formula \eqref{SU}. An invariant bivector has to be positive for different complex structures, which is impossible by \ref{IJ}. \endproof


\section{Homology of a leaf of a foliation}
Recall that a CW-space $X$ with the fundamental group $\pi_1(X)=\pi$ and the higher homotopy groups $\pi_i(X)=0$ for $i>1$ is called  {\bf a $K(\pi, 1)$-space of Eilenberg–MacLane} or just {\bf $K(\pi, 1)$-space}. Since the universal covering of a nilpotent Lie group is contractible, the nilmanifold $N=\Gamma\backslash G$ with the fundamental group $\pi_1(N)\approx\Gamma$ is a $K(\Gamma, 1)$-space. {\bf The cohomology} of the group $\Gamma$ is defined as $H^*(K(\Gamma,1),\Q)$.

\medskip
In \cite{N} Nomizu showed that the de Rham cohomology of a nilmanifold $N=\Gamma\backslash G$ can be computed using the left-invariant differential forms on the Lie group $G$.

\medskip
\begin{theorem}({\bf Nomizu}, \cite[Theorem 1]{N})\label{Nomizu}
Let $N=\Gamma\backslash G$ be a nilmanifold and $(\Lambda^*\g^*,d)$ is the Chevalley–Eilenberg complex. The natural inclusion of the complex of the left-invariant differential forms $\Omega^{inv}(G)$ on the nilpotent Lie group $G$ into the de Rham algebra on the nilmanifold $\Omega^{inv}(N)$ induces the isomorphism of the corresponding cohomology  $H^*(\g,\R)\approx H^*(N,\R)$ . 
\end{theorem}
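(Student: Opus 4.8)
The plan is to prove the isomorphism by induction on $\dim_\R\g$, comparing the de Rham complex of $N$ with the Chevalley--Eilenberg complex $(\Lambda^*\g^*,d)$ via the fibration structure coming from the center of $G$. First I would check that the inclusion is a morphism of complexes at all: a left-invariant form on $G$ is automatically $\Gamma$-invariant, hence descends to $N=\Gamma\backslash G$, and since the exterior derivative commutes with left translations, its restriction to $\Omega^{inv}(G)=\Lambda^*\g^*$ is exactly the Chevalley--Eilenberg differential. Thus we obtain a chain map $(\Lambda^*\g^*,d)\arrow(\Omega^*(N),d)$, and it remains to show it is a quasi-isomorphism.

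For the base case, when $G$ is abelian the manifold $N$ is a torus $T^n$, the left-invariant forms are precisely the constant-coefficient forms, and these are well known to generate $H^*(T^n,\R)=\Lambda^*\R^n$; so the map is an isomorphism. For the inductive step, recall that a nontrivial nilpotent Lie algebra has a nonzero center $\mathfrak z\subset\g$. Using the rational structure supplied by Maltsev theory (\ref{Mal}), I would choose $\mathfrak z$ so that the central subgroup $Z=\exp\mathfrak z\subset G$ meets $\Gamma$ in a cocompact lattice of $Z$. Then $(\Gamma\cap Z)\backslash Z$ is a torus $T^k$, the quotient $G'=G/Z$ is a simply connected nilpotent Lie group of strictly smaller dimension carrying the lattice $\Gamma'=\Gamma/(\Gamma\cap Z)$, and $N$ is the total space of a principal torus bundle
$$T^k\arrow N\xrightarrow{\ \pi\ }B,\qquad B=\Gamma'\backslash G'.$$

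The heart of the argument is to compare two spectral sequences. On the geometric side, the Leray--Serre spectral sequence of $\pi$ converges to $H^*(N,\R)$; because $\mathfrak z$ is central, conjugation acts trivially on the fiber, the monodromy representation of $\pi_1(B)$ on $H^*(T^k,\R)$ is trivial, and hence $E_2^{p,q}\cong H^p(B,\R)\otimes H^q(T^k,\R)$. On the algebraic side, the central ideal $\mathfrak z\subset\g$ induces the Hochschild--Serre filtration on $(\Lambda^*\g^*,d)$, whose spectral sequence converges to $H^*(\g,\R)$ with $E_2^{p,q}\cong H^p(\g/\mathfrak z,\R)\otimes\Lambda^q\mathfrak z^*$ (trivial coefficients, again since $\mathfrak z$ is central). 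The Nomizu inclusion respects both filtrations, so it induces a morphism of spectral sequences. On the $E_2$-page it factors as the tensor product of the Nomizu map for the smaller nilmanifold $B$, which is an isomorphism by the induction hypothesis, with the identification $\Lambda^q\mathfrak z^*\cong H^q(T^k,\R)$, which is the abelian base case. An isomorphism on $E_2$ forces an isomorphism on $E_\infty$, and since both spectral sequences converge to the respective cohomologies, the induced map $H^*(\g,\R)\arrow H^*(N,\R)$ is an isomorphism, completing the induction.

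The step I expect to be the main obstacle is arranging the central fibration compatibly with the lattice: one must choose the central subgroup to be \emph{rational} with respect to the Maltsev rational structure, so that both the fiber $T^k$ and the base $B$ are genuine compact nilmanifolds and the bundle is principal with trivial monodromy. Verifying that the two filtrations match under the inclusion --- that the geometric Leray filtration by $\pi$-horizontal degree corresponds to the algebraic filtration by $\mathfrak z$-degree --- and that the comparison is compatible with the differentials, is the technical core; once the $E_2$-pages are identified, the conclusion is formal.
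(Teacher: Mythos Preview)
Your argument is essentially correct and follows the classical inductive strategy due to Nomizu himself: reduce via a central (rational) subgroup to a principal torus bundle $T^k\to N\to B$, and compare the Leray--Serre spectral sequence of the bundle with the Hochschild--Serre spectral sequence of the central extension $0\to\mathfrak z\to\g\to\g/\mathfrak z\to0$, concluding by induction from the $E_2$-isomorphism. The caveats you flag---rationality of the central subgroup so that fiber and base are genuine compact nilmanifolds, and compatibility of the two filtrations under the inclusion---are exactly the points that need care, and they do go through.

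However, there is nothing to compare with: the paper does \emph{not} prove this statement. \ref{Nomizu} is stated with attribution to \cite[Theorem 1]{N} and closed immediately with \endproof; it is used as a black box in Section~4 and in the proof of \ref{fin1}. So your write-up is not a competing approach but rather a reconstruction of a result the paper simply quotes. If you want to match the paper, a one-line citation suffices; if you want to include a proof, what you have is the right outline, and you might note that Nomizu's original argument inducts on a one-dimensional center (circle fiber) rather than the full center, which slightly simplifies the rationality check.
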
\endproof

Since the nilmanifold $N$ is $K(\Gamma,1)$, the homology $H_*(N,\R)\approx H_*(\Gamma,\R)$, hence $H_*(\Gamma,\R)\approx H_*(\g,\R)$. Pickel showed that instead of real coefficients we can take the rational ones, i.e. 
$H^*(\Gamma,\Q)\approx H^*(\g,\Q)$ as well \cite{P}.

\medskip
Consider a nilpotent Lie group $G$ and let $\Gamma$ be its discrete subgroup, $\hat{\Gamma}\subset G$ the Maltsev completion of $\Gamma$, and define the Lie group $\hat\Gamma_{\R}:=\exp({\log(\hat\Gamma)\otimes\R})$. By \ref{Mal}, $\Gamma$ is a lattice in $\hat{\Gamma}_{\R}$.
Since the quotients $\Gamma\backslash G$ and $\Gamma\backslash\hat\Gamma_{\R}$ are both $K(\Gamma,1)$, we have 
    $H_*(\Gamma\backslash G)=H_*(\Gamma\backslash\hat\Gamma_{\R})$.
From \ref{Nomizu} follows that $H_*(\Gamma\backslash\hat\Gamma_{\R})=H_*(\Lie\hat\Gamma_{\R})$, where $\Lie\hat\Gamma_{\R}\subset\g$ is the Lie algebra of $\hat\Gamma_{\R}$ and $H_*(\Lie\hat\Gamma_{\R})$ denotes the cohomology of the Chevalley–Eilenberg homology complex \eqref{CH}.

\medskip
Let $N=\Gamma\backslash G$ be a nilmanifold, $\g$ the Lie algebra of the Lie group $G$
and $\goth{f}\subset\g$ a Lie subalgebra. Let ${F}:=\exp{\goth{f}}\subset G$ be the corresponding Lie group.   For each $x\in{G}$ define a subgroup of the lattice $\Gamma$ as follows:
\begin{equation}\label{leaf}
    \Gamma_x=\{\gamma\in\Gamma\,|\,x\gamma x^{-1}\in{F}\}.
\end{equation}
In other words, $\Gamma_x=\Gamma\cap x^{-1}Fx$.

\medskip
Recall that {\bf a distribution} on a smooth manifold $X$ is a sub-bundle $\Sigma\subset TX$. The distribution called {\bf involutive} if it is closed under the Lie bracket. {\bf A leaf} of the involutive distribution $\Sigma$ is the maximal connected, immersed submanifold $L\subset X$ such that $TL=\Sigma$ at each point of $L$.  The set of all leaves is called {\bf a foliation}.

The algebra $\goth{f}$ defines a left-invariant foliation $\Sigma$ on $G$. The leaves $\goth{L}_x$ of the corresponding foliation on $\Gamma\backslash G$ are diffeomorphic to $\Gamma_x\backslash{xF}$ for each $x\in G$.

\medskip
\definition
A subalgebra $\f\subset\g$ is said to be {\bf rational} with respect to a given rational structure $\g_{\Q}$ on $\g$ if $\f_{\Q}:=\g_{\Q}\cap\f$ is a rational structure for $\f$, i.e. $\f=\f_{\Q}\otimes\R$.

\medskip
\remark
 {\bf The rational homology of the leaf $\goth{L}_{x}$} of the foliation $\Sigma$ is equal to
 \begin{equation}\label{homology}
     H_*(\goth{L}_x,\Q):=H_*(\Gamma_x\backslash xF,\Q)=H_*(\f_{\Q}),
    \end{equation} 
 where $H_*(\f_{\Q})$ is the homology of the complex dual to the rational Chevalley–Eilenberg complex \eqref{CH}. The last equality makes sense because of \ref{Nomizu} and \cite{P}.

 Consider the natural map of homology
\begin{equation}\label{mapj}
    j:H_*(\goth{L}_x,\Q)\arrow H_*(N,\Q),
\end{equation}
 associated with the immersion $\goth{L}_x\arrow N$. Notice that $j$ does not have to be injective.

\medskip
\begin{claim}\label{claim} Let $N$ be a nilmanifold and
$X\subset N$ a subvariety tangent to the foliation $\Sigma$ generated by the left translates of a Lie subalgebra $\f\subset\g$.
Then the fundamental class $[X]\in H_*(N,\Q)$ belongs to the image of $H_*(\f_{\Q})$ in $H_*(N,\Q)$, where the map $\tau:H_*(\f_{\Q})\arrow H_*(N,\Q)$ is obtained from \eqref{homology} and \eqref{mapj}.
\end{claim}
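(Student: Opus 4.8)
**Proof plan for Claim 3.1 (actually Claim 4.something—the statement about fundamental classes of subvarieties tangent to the foliation).**

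Let me think about this. We have a nilmanifold $N = \Gamma\backslash G$, a Lie subalgebra $\f \subset \g$, and the associated left-invariant foliation $\Sigma$ on $G$, which descends to a foliation on $N$ whose leaves are $\goth{L}_x \cong \Gamma_x\backslash xF$. We have a subvariety $X \subset N$ which is tangent to $\Sigma$ everywhere (so $X$ is a union of pieces of leaves). We want to show $[X] \in H_*(N,\Q)$ lies in the image of the composite map $\tau: H_*(\f_\Q) \to H_*(N,\Q)$.

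The key subtlety: $X$ is tangent to $\Sigma$, so $X$ sits inside a single leaf... no wait, $X$ could be contained in the closure of a leaf, and leaves need not be closed. Actually if $X$ is a compact subvariety tangent to $\Sigma$, it's contained in a single leaf $\goth{L}_x$ (since leaves are maximal connected integral submanifolds, and $X$ is connected if irreducible — handle general $X$ by linearity over components). But the leaf $\goth{L}_x$ might not be closed, so its fundamental class in $N$ isn't immediately defined as a homology class; rather we use the map $j$ from the homology of the leaf (as an abstract manifold $\Gamma_x\backslash xF$, which IS a nilmanifold, hence has well-behaved homology computed by $H_*(\f_\Q)$ after checking rationality).

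So here's the plan. First, reduce to $X$ irreducible, hence connected, hence contained in a single leaf $\goth{L}_x = \Gamma_x\backslash xF$. Second, observe that $\goth{L}_x$ is itself a nilmanifold (compact quotient of the nilpotent Lie group $xF x^{-1} \cong F$ by the lattice $\Gamma_x$), so $[X]$ defines a genuine class in $H_*(\goth{L}_x,\Q)$; its image under $j$ is by definition $[X] \in H_*(N,\Q)$ (functoriality of homology under the immersion). Third — and this is the real content — identify $H_*(\goth{L}_x,\Q)$ with $H_*(\f_\Q)$ via \eqref{homology}; this requires knowing that $\f$ is a \emph{rational} subalgebra with respect to a rational structure for which $\Gamma_x$ is a lattice, i.e. that $\Gamma_x$ is a lattice in $xFx^{-1}$ at all. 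This is where Maltsev theory (Theorem 2.x) enters: even if $\f$ isn't rational for the given $\g_\Q$, we replace $F$ by the Maltsev completion $\hat{\Gamma_x}_\R$ of $\Gamma_x$, which is a closed connected subgroup of $G$ containing $\Gamma_x$ cocompactly, and whose Lie algebra $\Lie \hat{\Gamma_x}_\R \subseteq \f$ is rational. Then $\goth{L}_x$ has the homotopy type of $\Gamma_x \backslash \hat{\Gamma_x}_\R$, a genuine nilmanifold, and by Nomizu's theorem (Theorem 4.x) plus Pickel's rational refinement, $H_*(\goth{L}_x,\Q) \cong H_*(\Lie \hat{\Gamma_x}_\R)$, which maps into $H_*(\f_\Q)$ and thence into $H_*(\g_\Q) \cong H_*(N,\Q)$.

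The diagram to assemble is
\begin{equation*}
H_*(\Lie \hat{\Gamma_x}_\R) \xrightarrow{\ \sim\ } H_*(\goth{L}_x,\Q) \xrightarrow{\ j\ } H_*(N,\Q),
\end{equation*}
factoring through $H_*(\f_\Q) \to H_*(\g_\Q) \cong H_*(N,\Q)$ by the inclusion $\Lie\hat{\Gamma_x}_\R \hookrightarrow \f \hookrightarrow \g$ of Chevalley--Eilenberg complexes; since $[X]$ comes from $H_*(\goth{L}_x,\Q)$, its image in $H_*(N,\Q)$ lies in the image of $H_*(\f_\Q)$, which is exactly $\im\tau$. The main obstacle I expect is the careful handling of the case where $\f$ is not rational and the leaf $\goth{L}_x$ is not closed in $N$: one must argue that $\Gamma_x$ is nonetheless a lattice in $xFx^{-1}$ (equivalently, that $\overline{xF}$ meets $\Gamma$ in a cocompact lattice of its identity component), or else pass to the Maltsev completion and check that the integration functional \eqref{current} on $X$ is insensitive to replacing $F$ by $\hat{\Gamma_x}_\R$ since $X \subset \goth{L}_x$ already lies in (the image of) this smaller leaf. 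A secondary bookkeeping point is making sure the immersion $\goth{L}_x \to N$ induces the claimed map on rational homology compatibly with the Chevalley--Eilenberg identifications — this is routine naturality of the Nomizu isomorphism but should be stated.
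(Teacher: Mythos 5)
Your proposal follows essentially the same route as the paper's (very terse) proof: $X$ lies in a single leaf, diffeomorphic to $\Gamma_x\backslash xF$, whose rational homology is identified with $H_*(\f_{\Q})$ via Nomizu--Pickel, and the fundamental class is then pushed forward under the map $j$ induced by the immersion of the leaf. The additional care you take with irreducibility, the rationality of $\f$, and the Maltsev completion of $\Gamma_x$ addresses points the paper leaves implicit (its preceding remark simply asserts the identification $H_*(\Gamma_x\backslash xF,\Q)=H_*(\f_{\Q})$), so your write-up is, if anything, more complete.
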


\proof
Let $X$ be a subvariety in a leaf of the foliation $\Sigma\subset TN$ and let ${F}:=\exp{\f}$. A leaf of $\Sigma$ is diffeomorphic to $\Gamma_x\backslash{xF}$, hence $[X]\in \tau(H_*(\Gamma_x\backslash xF,\Q))$, where we identified $H_*(\Gamma_x\backslash xF,\Q)=H_*(\f_{\Q})$. 
\endproof


\section{Finale}
Let $(\g,I,J,K)$ be a hypercomplex nilpotent Lie algebra. 
Define inductively $$\g_i^{\H}:=\H[\g_{i-1}^{\H},\g_{i-1}^{\H}],$$ where $\g^{\H}_1=\H[\g,\g]$ and let  $\ag_{i}:={\g^{\H}_{i-1}}/{\H[\g^{\H}_{i-1},\g^{\H}_{i-1}]}$ be the corresponding commutative quotient algebra, $i\in\Z_{>0}$.

\medskip
Observe that for any commutative Lie algebra $\ag$ its second homology group coincides with the space of all bivectors, $H_2(\ag,\R)=\Lambda^2\ag$. Denote by $\Lambda^{1,1}_{L,pos}\ag$ the set of positive $(1,1)$-bivectors with respect to the complex structure $L$.

\medskip
\begin{proposition}\label{Pos}
Let $\ag$ be a commutative hypercomplex Lie algebra, and $\s\subset\Lambda^2\ag$ a countable set of non-zero bivectors.
Then for all $L\in\C\rm{P}^1$ except at
most a countable number, the intersection
$\Lambda^{1,1}_{L,pos}\ag\cap\s=\emptyset$.
\end{proposition}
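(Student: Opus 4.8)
The plan is to fix a single non-zero bivector $\xi\in\s$ and show that the set of $L\in\C{\rm P}^1$ for which $\xi$ is a positive $(1,1)$-bivector with respect to $L$ is at most a single point (in fact, it is at most a one-point set, and typically empty); then the full claim follows by taking a countable union over the countable set $\s$, since a countable union of countable (indeed finite) sets is countable.

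First I would observe that Lemma \ref{IJ} already does almost all the work: it states that for distinct complex structures $L$ and $L'$ induced by quaternions, $\Lambda^{1,1}_{L,pos}\ag\cap\Lambda^{1,1}_{L',pos}\ag=0$. Therefore, for a \emph{fixed} non-zero $\xi$, there can be at most one $L\in\C{\rm P}^1$ with $\xi\in\Lambda^{1,1}_{L,pos}\ag$: if $\xi$ were positive for two distinct structures $L\neq L'$, it would lie in the intersection, forcing $\xi=0$, a contradiction. Call this (possibly non-existent) unique value $L_\xi$. Define $R:=\{\,L_\xi : \xi\in\s,\ L_\xi\text{ exists}\,\}\subset\C{\rm P}^1$. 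Since $\s$ is countable, $R$ is countable.

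It remains to check that for $L\notin R$ we have $\Lambda^{1,1}_{L,pos}\ag\cap\s=\emptyset$. Suppose not; then there is $\xi\in\s$ with $\xi\in\Lambda^{1,1}_{L,pos}\ag$, and in particular $\xi\neq 0$ (every element of $\s$ is non-zero by hypothesis). But then by the uniqueness just established, $L=L_\xi\in R$, contradicting $L\notin R$. This completes the argument.

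I do not expect any serious obstacle here: the entire content is packaged in Lemma \ref{IJ}, and the rest is the elementary bookkeeping that a countable union of finite sets is countable. The only point requiring a word of care is the reduction to a \emph{single} complex structure $L$ — i.e., noticing that "for all $L$ except a countable set, $\Lambda^{1,1}_{L,pos}\ag\cap\s=\emptyset$" is equivalent to "for each $\xi\in\s$, the set of bad $L$ is at most a point" — but this is immediate once one fixes $\xi$ first and varies $L$, rather than the other way around. One should also recall that on the commutative algebra $\ag$ one has $H_2(\ag,\R)=\Lambda^2\ag$, so that the bivectors in $\s$ are exactly homology classes and the statement is the one actually needed in the sequel; this identification was already noted before the proposition and requires nothing further.
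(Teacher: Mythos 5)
Your argument is correct and coincides with the paper's own proof: both fix a non-zero $\xi\in\s$, invoke \ref{IJ} to conclude that at most one $L_\xi\in\C{\rm P}^1$ can make $\xi$ positive, and then remove the countable union $\bigcup_{\xi\in\s}L_\xi$. No differences worth noting.
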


\proof
By \ref{IJ} for any non-zero $\xi\in\s$ there exists at most one complex structure $L_{\xi}\in\C{\rm P}^1$ such that $\xi\in\Lambda^{1,1}_{L_{\xi},pos}\ag$. The union $\bigcup_{\xi\in\s}L_{\xi}$ is at most countable, hence for any $L\in\C{\rm P}^1\backslash\bigcup_{\xi\in\s}L_{\xi}$ the intersection $\Lambda^{1,1}_{L,pos}\ag\cap\s$ is empty.
\endproof


\medskip
Let $\Sigma_{i}$ be the foliation on a nilmanifold $N$ generated by the left-translates of the Lie subalgebra $\g_i^{\H}\subset\g={T}_eG$ and $\goth{L}_{x,i}\subset N$ a leaf of the foliation $\Sigma_i$. The leaf $\goth{L}_{x,i}$ is diffeomorphic to the left quotient $\Gamma_x\backslash xF_i$, where $F_i=\exp{\g_i^{\H}}\subset G$.

\medskip
Consider the natural projection $$p_i:\g_{i-1}^{\H}\arrow\ag_{i}.$$ Let $r_i$ be the corresponding map of the second homology:
\begin{equation*}
    r_i:H_2(\g_{i-1}^{\H})\arrow {H}_2(\ag_{i})=\Lambda^2\ag_i.
\end{equation*}
Then the image of a homology class in $H_2(\g^{\H}_{i-1})$  defines a bivector on the commutative Lie algebra $\ag_i$.

Denote by $\g^{\H}_{i,\Q}=\g^{\H}_{i}\cap\g_{\Q}$. Let $\s_i:=r_i(H_2(\g_{i-1,\Q}^{\H}))\subset\Lambda^2\ag_i$ and $R\subset\C{\rm P}^1$ be a union of
\begin{equation}\label{defR_i}
    R_i:=R[\s_i]\subset\C\rm{P}^1
\end{equation}
 the set of complex structures $L$ such that there exists a positive bivector $\xi\in\s_i\cap\Lambda^{1,1}_{L,pos}\ag$. 
By \ref{Pos}, the set $R_i$ is countable.

\medskip
\definition
Let $\Sigma_k$ be a holomorphic foliation obtained from the Lie subalgebra $\g^{I}_k=\g_k+I\g_k$. {\bf A transversal K\"ahler form}  $\omega_{k}$ with respect to the holomorphic foliation $\Sigma_k$ is a closed positive (1,1)-form, such that $\ker\omega_{k}$ is precisely the tangent space of the foliation, i.e. $\omega_{k}(\Sigma_k)=0$.

\medskip
\proposition\label{fin1}
 Let $C_L$ be a complex curve in a complex nilmanifold $(N,L)$, where $L\in\C{\rm P}^1\backslash R_i$ and the set $R_i\subset\C{\rm P}^1$ is defined in \eqref{defR_i}. Suppose that $C_L$ is tangent to the foliation $\Sigma_{i-1}$ defined by $\g^{\H}_{i-1}$ as above. Then it is also tangent to $\Sigma_{i}$.

\proof 
From \ref{claim} it follows that the fundamental class $[C_L]\in H_2(N,\Q)$ of the curve $C_L$ belongs to $j(H_2(\goth{L}_{x,i-1},\Q))\subset H_2(N,\Q)$, where $j$ is the standard map on the rational second homology \eqref{mapj}. \ref{Nomizu} allows us to identify the
 fundamental class $[C_L]\in H_2(N,\Q)$ with the bivector \eqref{current} $\xi_{C_L}=:\xi$.
 Under the projection $r_{i}$ the
 fundamental class $[C_L]$ is mapped to the bivector $r_{i}(\xi)\in\Lambda^2\ag_{i}$. 
  From the definition of the set $R_i$ we know that $\xi\in\ker r_{i}$ and from \ref{p_1} follows that $\xi\in\Lambda^{1,1}_{L,pos}\ker p_{i}=\Lambda^{1,1}_{L,pos}\g_{i}^{\H}$.
  
  Suppose that $\omega_{i-1}\in r^*_i(\Lambda^{1,1}_{L}\ag_{i}^*)$ is a transversal K\"ahler form of the foliation $\Sigma_{i-1}$. Then $\int_{C_L} \omega_{i-1}>0$ unless $C_L$ lies in the leaf of the foliation $\Sigma_{i-1}$.
  However, $\int_{C_L} \omega=0$ because $\omega_{i-1}$ is closed (otherwise, referring to the analogue of Stokes' theorem, we obtain that the volume of a compact manifold is equal zero).
  Since $\g^{\H}_{i-1}\supset\g^{\H}_i$ we have $\omega_{i-1}\in\Lambda^{1,1}_{L,pos}(\g^{\H}_{i-1})^*\subset\Lambda^{1,1}_{L,pos}(\g^{\H}_{i})^*$.
  Hence, $\g^{\H}_i\subset\ker\omega_{i-1}$. Hence, $\omega_{i-1}$ is a transversal K\"ahler form with respect to the foliation $\Sigma_i$ and $C_L$ lies in a leaf of $\Sigma_i$. \endproof

 
\medskip
Assume that for some $k\in\Z_{>0}$ the following sequence terminates: 
\begin{equation}\label{gH}
 \g_1^{\H}\supset\g_2^{\H}\supset\cdots\supset\g_{k-1}^{\H}\supset\g_k^{\H}=0,
\end{equation}
i.e. the Lie algebra $\g$ is $\H$-solvable, see also \ref{H-sol}.

\medskip
\corollary\label{cor} Let $L\in\C{\rm P}^1\backslash R$, where $R=\bigcup R_i$ is the countable subset defined in \eqref{defR_i}, and assume that the sequence \eqref{gH} terminates to zero. Then the complex nilmanifold $(N,L)$ contains no complex curves.

\proof Suppose that the sequence \eqref{gH} vanishes on the $k$-th step, i.e. $\Sigma_k=\{0\}$. Then \ref{cor} follows from the \ref{fin1} and the induction on $i$. \endproof

\medskip
\theorem\label{fin2}
Let $(N, I, J, K)$ be a hypercomplex nilmanifold and assume that the corresponding Lie algebra is $\H$-solvable. Then there are no complex curves in the general fiber of the holomorphic twistor projection $\Tw(N)\arrow\C\rm{P}^1$.

\proof
Follows from \ref{cor}. \endproof


\noindent {\sc Yulia Gorginyan\\
{\sc Instituto Nacional de Matem\'atica Pura e
              Aplicada (IMPA) \\ Estrada Dona Castorina, 110\\
Jardim Bot\^anico, CEP 22460-320\\
Rio de Janeiro, RJ - Brasil\\
also:\\
{\sc Laboratory of Algebraic Geometry,\\
National Research University (HSE),\\
Department of Mathematics, 6 Usacheva Str.\\ Moscow, Russia}\\
\tt  ygorginyan@hse.ru }\\
}

\end{document}